\numberwithin{equation}{section}
\theoremstyle{plain}
\newtheorem{thm}{Theorem}[section]
\newtheorem{prop}[thm]{Proposition}
\theoremstyle{remark}
\theoremstyle{definition}
\newtheorem{defi}[thm]{Definition}
\newtheorem{ques}[thm]{Question}
\newcommand\eps{\epsilon}
\newcommand\Ic{{\mathcal{I}}}
\newcommand\Bc{{\mathcal{B}}}
\newcommand\Reals{{\mathbb R}}
\newcommand\Nats{{\mathbb N}}
\begin{document}

\title[H\"older's inequality]{H\"older's inequality for roots of symmetric operator spaces}

\author[Dykema]{Ken Dykema$^{1}$}
\address{K.D., Department of Mathematics, Texas A\&M University,
College Station, TX 77843-3368, USA}
\email{kdykema@math.tamu.edu}

\author[Skripka]{Anna Skripka$^2$}
\address{A.S., Department of Mathematics and Statistics,
	University of New Mexico, 400 Yale Blvd NE, MSC01 1115, Albuquerque, NM 87131, USA}
\email{skripka@math.unm.edu}

\thanks{\footnotesize ${}^1$Research supported in part by NSF grant DMS--1202660.
${}^2$Research supported in part by NSF grant DMS--1249186.}
\subjclass[2000]{Primary 47B10, secondary 47A30}


\date{May 19, 2015}

\begin{abstract}
We prove a version of H\"older's
inequality with a constant
for $p$-th roots of symmetric operator spaces of operators affiliated to a semifinite von Neumann algebra factor,
and with constant equal to $1$ for strongly symmetric operator spaces.
\end{abstract}

\maketitle

\section{Introduction}
Let $\Bc$ be an infinite dimensional, $\sigma$--finite von Neumann algebra factor
equipped with a normal, faithful, semifinite (or finite) trace $\tau$ and acting on a Hilbert space.
A closed, densely defined,
unbounded operator $T$ on the Hilbert space is affiliated with $\Bc$ if the partial isometry from its polar decomposition and all spectral projections of its
absolute value lie in $\Bc$, and it is said to be $\tau$-measurable if for every $\eps>0$, there
is a projection $p\in\Bc$ such that $\tau(p)<\eps$ and $T(1-p)$ is bounded.
$\Bc$ together with the set of
$\tau$-measurable operators $T$ as described above is a $*$-algebra under natural operations (performing addition and multiplication
with appropriate domains, and taking closures).
(See, for example,~\cite{LSZ} for details.)
We let $S(\Bc,\tau)$ denote this $*$-algebra.
Note that, when $\Bc$ is a type I von Neumann algebra, then $S(\Bc,\tau)=\Bc$.

We will consider subspaces $\Ic\subseteq S(\Bc,\tau)$ that are $\Bc$-bimodules and such that there is a complete symmetric norm $\|\cdot\|_\Ic$ on $\Ic$ (see Definition \ref{symmetric}).
Such a pair $(\Ic,\|\cdot\|_\Ic)$ is called a {\it symmetric operator space}~\cite{LSZ}.
Note that if $\Ic\subseteq\Bc$, then $\Ic$ is an ideal of $\Bc$.
Among the symmetric operator spaces are the
fully symmetric
Schatten-von Neumann ideals $S^p$, noncommutative $L^p$ spaces, $1\leq p<\infty$, and the Marcinkiewicz (or Lorentz) operator spaces
\begin{equation}\label{eq:Mcz}
\Ic_\psi:=\left\{A\in S(\Bc,\tau):\, \|A\|_{\Ic_{\psi}}:=\sup_{t>0}\frac{1}{\psi(t)}\int_0^t\mu_s(A)\,ds<\infty\right\},
\end{equation}
where $\psi$ is a concave function satisfying
\[\lim_{t\rightarrow 0^+}\psi(t)=0,\quad \lim_{t\rightarrow \infty}\psi(t)=\infty\]
and $\mu(A)$ is the generalized singular value function
of $A$ (see \eqref{gsv}).

For $1<p<\infty$, define the set
\begin{equation}
\label{rootideal}
\Ic^{1/p}=\{A\in S(\Bc,\tau):\, |A|^p\in\Ic\}
\end{equation}
and the function
\begin{equation}
\label{rootnorm}
\|A\|_{\Ic^{1/p}}:=\||A|^p\|_{\Ic}^{1/p},\quad\text{for }\,A\in\Ic^{1/p}.
\end{equation}
The space $\Ic^{1/p}$ endowed with the function $\|\cdot\|_{\Ic^{1/p}}$ is also a symmetric operator space
(see Theorem \ref{correspondence}).
When $\Ic$ is the noncommutative $L^1$ space $\{A\in S(\Bc,\tau):\, \tau(|A|)<\infty\}$, then $\Ic^{1/p}$
is the noncommutative $L^p$ space and the noncommutative H\"older inequality
\begin{align}
\label{Hi}
\|AB\|_{\Ic}\leq\|A\|_{\Ic^{1/p}}\|B\|_{\Ic^{1/q}},\quad(A\in\Ic^{1/p},\; B\in\Ic^{1/q},\;
1<p,q<\infty,\;\frac1p+\frac1q=1),
\end{align}
holds~\cite[Theorem 4.2]{FK}.
By \cite[Proposition 2.5]{DS},
the H\"{o}lder inequality~\eqref{Hi}
holds when $\Ic$ is a Marcinkiewicz operator space 
of a $\sigma$-finite semifinite von Neumann algebra factor $\Bc$, with norm
$\|A\|_{\Ic_\psi}$ as indicated in~\eqref{eq:Mcz}
(and also --- see the erratum to~\cite{DS} --- for $\Ic_\psi\cap\Bc$
with the symmetric norm
$\max\{\|\nolinebreak\cdot\nolinebreak\|,\|\nolinebreak\cdot\nolinebreak\|_{\Ic_\psi}\}$.)

In this note (see Theorem \ref{Holderideal}), we show that a weaker H\"{o}lder-type inequality
\begin{align}\label{H4}
\|AB\|_{\Ic}\leq 4\,\|A\|_{\Ic^{1/p}}\|B\|_{\Ic^{1/q}}
\end{align}
holds in an arbitrary symmetric operator space $(\Ic,\|\cdot\|_\Ic)$.
We also show that the H\"older inequality~\eqref{Hi} 
holds in an arbitrary strongly symmetric operator space (see Definition \ref{ssdef}).
The class of strongly symmetric operator spaces includes the fully symmetric operator space, and the first example of a symmetric operator space that is not fully symmetric is due to \cite{Russu}. An example of a symmetric (Banach) function space that is not strongly symmetric can be found in \cite{SSS}, and examples of  symmetric operator spaces that are not strongly symmetric can be constructed based on Theorem \ref{correspondence}.

Observe that if we take the equivalent norms $\|\cdot\|_{1/p}=2\,\|\cdot\|_{\Ic^{1/p}}$ and $\|\cdot\|_{1/q}=2\,\|\cdot\|_{\Ic^{1/q}}$
on $\Ic^{1/p}$ and $\Ic^{1/q}$, respectively,
then from~\eqref{H4} we get the inequality
\begin{align}\label{H4'}
\|AB\|_{\Ic}\leq \|A\|_{1/p}\|B\|_{1/q}.
\end{align}

Our interest in these versions of H\"older's inequality for symmetric operator spaces is motivated in part
by the second order trace formulas in \cite{DS,S},
which are proved in the case of a symmetric operator ideal $\Ic$
for perturbations in the square root ideal $\Ic^{1/2}$ that possesses a norm $\|\cdot\|_{1/2}$ satisfying
\eqref{H4'} with $p=q=2$.

\section{A H\"{o}lder-type inequality in symmetric operator spaces}

The generalized singular value function $\mu(A)$ of $A\in S(\Bc,\tau)$ is defined by
\begin{align}
\label{gsv}
\mu_t(A):=\inf\{s\geq 0:\, \tau(\chi_{(s,\infty)}(|A|))\leq t\},\quad t>0
\end{align}
(see \cite{FK}). 
It has properties analogous to those of the decreasing rearrangement
and to usual singular values of operators (see \cite[Lemma 2.5]{FK}).

\begin{defi}
\label{symmetric}
A norm on a $\Bc$-bimodule $\Ic$ of $S(\Bc,\tau)$ is called {\em symmetric} if
$A\in S(\Bc,\tau)$,
$B\in\Ic$, $\mu(A)\leq\mu(B)$ implies $A\in\Ic$ and $\|A\|_\Ic\leq\|B\|_\Ic$.
\end{defi}
We note that the properties of an operator ideal that were important in \cite{DS,S} naturally hold for an ideal in a symmetric operator space. If a $\Bc$-bimodule $\Ic\subseteq S(\Bc,\tau)$ is symmetric, then $A\in\Bc$, $B\in\Ic$, and $0\le A\le B$ imply $\|A\|_\Ic\le\|B\|_\Ic$. It is proved in \cite[Theorem 2.5.2]{LSZ} that for a Banach bimodule $(\Ic,\|\cdot\|_{\Ic})$ of $S(\Bc,\tau)$, the symmetry of the norm $\|\cdot\|_{\Ic}$ is equivalent to the property
\[\|ABC\|_\Ic\leq\|A\|\,\|B\|_\Ic\,\|C\|,\quad (A,C\in\Bc, B\in\Ic).\]
It was also assumed in \cite{DS,S} that $\Ic$ lies in $\Bc$ and the norm
has the property $\|\cdot\|\le K\|\cdot\|_\Ic$ for some constant $K>0$, where $\|\cdot\|$ is the operator norm.
If $\mathcal{J}$ is a symmetric operator space in $S(\Bc,\tau)$, then $\mathcal{J}\cap\Bc$ endowed with the norm $\max\{\|\cdot\|,\|\cdot\|_{\mathcal{J}}\}$ is a symmetric operator ideal satisfying
these properties.

There is a correspondence between symmetric operator spaces and Banach function and sequence spaces that is provided by the generalized singular values of operators.
In the case of a type I$_\infty$ factor, this is the well known Calkin correspondence.
We will recall some salient aspects here; see, for example, \cite{LSZ} for a thorough exposition.
Let
\[
J=\begin{cases}
\Nats,&\Bc\text{ a type I$_\infty$ factor} \\
[0,1],&\Bc\text{ a type II$_1$ factor} \\
[0,\infty),&\Bc\text{ a type II$_\infty$ factor,}
\end{cases}
\]
equipped with counting measure if $J=\Nats$ and Lebesgue measure otherwise;
let $D(J)$ denote the vector space of all measurable, real-valued functions $f$ on $J$ such that for every $\eps>0$, the measure of $\{t\in J\mid |f(t)|>\eps\}$
is finite (and where elements of $D(J)$ are regarded as being the same if they differ only on a set of measure zero). Let $x^*$ denote the decreasing rearrangement of $|x|$, when $x\in D(J)$.
Properties of the decreasing rearrangement can be found in, for example, \cite[Proposition 1.7]{BS}.
\begin{defi}
\label{symmetric.function.sp}
A {\em symmetric function space} (also called, when $J=\Nats$, a {\em symmetric sequence space})  is a subspace $E$ of $D(J)$
equipped with a complete norm $\|\cdot\|_E$ satisfying  $x\in D(J)$, $y\in E$, $x^*\leq y^*$ implies $x\in E$ and $\|x\|_E\leq\|y\|_E$.
\end{defi}
Henceforth, we will refer to these as symmetric function spaces,
including the possibility that they are symmetric sequence spaces.

By \cite{GI}, every $\Bc$-bimodule $\Ic\subseteq S(\Bc,\tau)$ can be uniquely described by its characteristic set
\[\mu(\Ic):=\{\mu(A):\, A\in\Ic\}.\]
Alternatively, the correspondence can be seen at the level of function spaces and this
goes further to characterize also symmetric norms.
In particular, given a symmetric operator space $(\Ic,\|\cdot\|_{\Ic})$ in $S(\Bc,\tau)$ consider
the subspace
\[
E_\Ic:=\{x\in D(J):\, x^*\in\mu(\Ic)\}
\]
and for $x\in E_\Ic$ set $\|x\|_{E_\Ic}:=\|A\|_{\Ic}$ for $A\in\Ic$ such that $\mu(A)=x^*$.
Conversely, given a symmetric function
space $(E,\|\cdot\|_E)$, define
\[
\Ic_E:=\{A\in S(\Bc,\tau):\, \mu(A)\in E\},\quad \|A\|_{\Ic_E}:=\|\mu(A)\|_E.
\]
We have the following correspondence between symmetric operator spaces and symmetric function spaces.

\begin{thm}[{\cite[Theorem 3.1.1]{LSZ}; see also \cite[Theorem 8.11]{KS}}]
\label{correspondence}
The map
\[
(\Ic,\|\cdot\|_\Ic)\mapsto(E_\Ic,\|\cdot\|_{E_\Ic})
\]
is a bijection from the set of all symmetric operator
spaces in $S(\Bc,\tau)$ onto the set of all symmetric function spaces in $D(J)$, whose inverse is the map
\[
(E,\|\cdot\|_E)\mapsto(\Ic_E,\|\cdot\|_{\Ic_E}).
\]
\end{thm}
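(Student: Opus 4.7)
The plan is to verify that each of the two maps lands in the correct target category, and that the two compositions are the identity. First, for the map $\Ic\mapsto E_\Ic$, well-definedness of the norm reduces to checking that $\|A\|_\Ic$ depends only on $\mu(A)$: if $A_1,A_2\in\Ic$ both satisfy $\mu(A_1)=\mu(A_2)=x^*$, then Definition \ref{symmetric} applied in each direction forces $\|A_1\|_\Ic=\|A_2\|_\Ic$. The existence of such an $A$ for every $x\in E_\Ic$ is built into the definition of $E_\Ic$.

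Next I would show that $E_\Ic$ is a symmetric function space (and symmetrically, that $\Ic_E$ is a symmetric operator space). The symmetry axiom of Definition \ref{symmetric.function.sp} for $E_\Ic$ follows from Definition \ref{symmetric} combined with the standard fact that every non-negative decreasing element of $D(J)$ arises as the generalized singular value function of some $A\in S(\Bc,\tau)$, realized via functional calculus applied to a positive element with suitable spectral distribution in a masa of $\Bc$. The vector-space structure rests on the fundamental properties of $\mu$:
\[
\mu(\la A)=|\la|\mu(A),\quad \mu_{s+t}(A+B)\leq\mu_s(A)+\mu_t(B),\quad \mu(UAV)\leq\|U\|\|V\|\mu(A)\ (U,V\in\Bc),
\]
again combined with the realization result. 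For $\Ic_E$, the bimodule and symmetric-norm axioms follow directly from these same inequalities; linearity requires the observation that any symmetric function space $E$ is closed under the dilation $f(t)\mapsto f(t/2)$ with bounded dilation operator, which itself follows from the symmetric-norm axiom applied to the decreasing rearrangement of the dilate.

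For the inverse identities, $\Ic=\Ic_{E_\Ic}$ amounts to the equivalence $A\in\Ic\iff\mu(A)\in\mu(\Ic)$: the forward direction is trivial and the reverse is immediate from Definition \ref{symmetric}; the norm equality is automatic. The identity $E=E_{\Ic_E}$ is analogous. The step I expect to be most delicate is verifying completeness of $\|\cdot\|_{E_\Ic}$, since a Cauchy sequence in $E_\Ic$ has no canonical lift to a Cauchy sequence of operators in $\Ic$ and completeness cannot be transferred across the map in a single step. I would address this by passing to a rapidly Cauchy subsequence, invoking the realization result to lift it to a sequence of operators whose differences have norm-controlled generalized singular values, using completeness of $(\Ic,\|\cdot\|_\Ic)$ together with convergence-in-measure properties of $S(\Bc,\tau)$ to produce a limit operator, and finally identifying the function-space limit with its generalized singular value function.
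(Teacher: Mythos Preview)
The paper does not supply a proof of this theorem; it is quoted from \cite[Theorem~3.1.1]{LSZ} and \cite[Theorem~8.11]{KS} and used as a black box. So there is no ``paper's own proof'' to compare against, and your outline is really an attempt at the cited results themselves.

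That said, your sketch misses the genuinely hard step. You write that for $\Ic_E$ ``the bimodule and symmetric-norm axioms follow directly'' from the listed singular-value inequalities, but a symmetric norm is in particular a \emph{norm}, and the triangle inequality
\[
\|\mu(A+B)\|_E\le\|\mu(A)\|_E+\|\mu(B)\|_E
\]
does \emph{not} follow from $\mu_{s+t}(A+B)\le\mu_s(A)+\mu_t(B)$ together with dilation boundedness. Those ingredients only give a quasi-triangle inequality $\|\mu(A+B)\|_E\le C(\|\mu(A)\|_E+\|\mu(B)\|_E)$ with $C>1$. Upgrading this to $C=1$ for an arbitrary symmetric (not strongly symmetric) function space $E$ is precisely the content of the Kalton--Sukochev theorem \cite{KS}, and its proof requires substantially more machinery (uniform Hardy--Littlewood submajorization and related interpolation-type arguments) than anything in your outline. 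For strongly symmetric $E$ the submajorization $\mu(A+B)\prec\prec\mu(A)+\mu(B)$ settles it immediately, which is why the general case remained open for some time.

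A secondary omission: you discuss completeness of $(E_\Ic,\|\cdot\|_{E_\Ic})$ but not of $(\Ic_E,\|\cdot\|_{\Ic_E})$, and the latter cannot be recovered from the former until the bijection is already established. Your justification for dilation boundedness (``follows from the symmetric-norm axiom applied to the decreasing rearrangement of the dilate'') is also too vague: the dilate $D_2f$ satisfies $(D_2f)^*\ge f^*$, so the symmetric axiom does not apply directly; one needs an explicit decomposition argument.
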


It is proved in \cite[Theorem II.4.1]{KPS} that every symmetric function subspace $E$ of $D(J)$ satisfies
\[L^1\cap L^\infty\subseteq E\subseteq L^1+ L^\infty.\]
Due to Theorem \ref{correspondence}, we have that every symmetric operator space $\Ic$ in $S(\Bc,\tau)$ satisfies
\[L^1(\Bc,\tau)\cap L^\infty(\Bc,\tau)\subseteq\Ic\subseteq L^1(\Bc,\tau)+L^\infty(\Bc,\tau).\]

We will use the H\"older inequality in Banach lattices to obtain the inequality~\eqref{H4} in symmetric operator spaces.

\begin{defi}[{\cite[Definition 1.a.1]{LT}}]
\label{lattice}
A partially ordered Banach space $X$ over the field $\Reals$ is called a Banach lattice, provided
\begin{enumerate}[(i)]
\item\label{latticei}
$x\leq y$ implies $x+z\leq y+z$, for every $x,y,z\in X$,

\item\label{latticeii}
$ax\geq 0$, for every $0\leq x\in X$ and every $a\in (0,\infty)$,

\item\label{latticeiii}
for all $x,y\in X$ there exists a least upper bound $x\vee y\in X$ and a greatest lower bound $x\wedge y\in X$,

\item\label{latticeiv}
$\|x\|_X\leq\|y\|_X$ whenever $|x|\leq|y|$, for $x,y\in X$, where $|x|:=x\vee(-x)$.
\end{enumerate}
\end{defi}
Every symmetric function space $(E,\|\cdot\|_E)$
is a Banach lattice with the partial order defined by pointwise inequality (almost everywhere),
with $\vee$ and $\wedge$ then taken pointwise.
Indeed, the properties \eqref{latticei} and \eqref{latticeii} hold trivially
and the property \eqref{latticeiv} is an immediate consequence of monotonicity
of the decreasing rearrangement and the symmetric property of $E$.
Let $x,y\in E$. Since $0\leq|x\vee y|\leq|x|\vee |y|\leq|x|+|y|$,
by monotonicity of the decreasing rearrangement and the symmetric property of $E$, we get $x\vee y\in E$.
Since $x\wedge y=x+y-x\vee y$, we also have $x\wedge y\in E$. Thus, the property \eqref{latticeiii} also holds.

In a Banach lattice $X$, there is a functional calculus
\[
X^n\ni(x_1,\ldots,x_n)\mapsto f(x_1,\ldots,x_n)\in X
\]
for functions $f:\Reals^n\to\Reals$ that are homogenous of degree $1$
(see~\cite[Theorem 1.d.1]{LT}).
In the case of a symmetric function space, this functional calculus coincides with defining $f(x_1,\ldots,x_n)$ pointwise.

\begin{prop}[{\cite[Proposition 1.d.2(i)]{LT}}]
\label{Holderlattice}
Let $X$ be a Banach lattice. For every $0<\theta<1$ and every $x,y\in X$,
\[\left\||x|^\theta\cdot|y|^{1-\theta}\right\|_X\leq \|x\|_X^\theta\cdot\|y\|_X^{1-\theta},\]
where $|x|^\theta|y|^{1-\theta}\in X$ is given by the functional calculus described above.
\end{prop}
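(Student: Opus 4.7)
The plan is to reduce the inequality to the scalar Young inequality $a^\theta b^{1-\theta}\le\theta a+(1-\theta)b$ for $a,b\ge 0$, and then transfer the resulting pointwise bound into $X$ via the homogeneous functional calculus of \cite[Theorem 1.d.1]{LT}. The factors $\|x\|_X^\theta\|y\|_X^{1-\theta}$ on the right enter through a normalization before Young is applied.

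After disposing of the trivial cases $x=0$ or $y=0$, set $\alpha:=\|x\|_X>0$ and $\beta:=\|y\|_X>0$. Applying Young to $|s|/\alpha$ and $|t|/\beta$ and clearing denominators yields the pointwise inequality
\[
|s|^\theta|t|^{1-\theta}\le\theta\,\alpha^{\theta-1}\beta^{1-\theta}\,|s|+(1-\theta)\,\alpha^\theta\beta^{-\theta}\,|t|
\]
of continuous, degree-one homogeneous functions on $\Reals^2$. Because the functional calculus of \cite[Theorem 1.d.1]{LT} is order-preserving on such functions, substituting $(x,y)$ for $(s,t)$ transports this into the corresponding lattice inequality in $X$. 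I would then take $\|\cdot\|_X$ of both sides, using axiom (iv) of Definition \ref{lattice} to pass from the lattice inequality to an inequality of norms (noting $|x|^\theta|y|^{1-\theta}\ge 0$), then the triangle inequality on the right, and finally the identity $\||x|\|_X=\|x\|_X$ (which follows from $||x||=|x|$ together with axiom (iv) applied in both directions). The right-hand side then collapses to
\[
\theta\,\alpha^{\theta-1}\beta^{1-\theta}\cdot\alpha+(1-\theta)\,\alpha^\theta\beta^{-\theta}\cdot\beta=\alpha^\theta\beta^{1-\theta},
\]
which is the desired bound.

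The only step requiring genuine care is the invocation of the functional calculus: in a general Banach lattice its order-preserving nature rests on Krivine's representation theorem, so the real content is hidden in \cite[Theorem 1.d.1]{LT}. However, in the symmetric function spaces that are our intended application via Theorem \ref{correspondence}, the calculus is defined pointwise (almost everywhere), so the transfer of the scalar inequality into $X$ is immediate. I therefore expect no serious obstacle beyond citing \cite[Theorem 1.d.1]{LT}; all the remaining manipulations are routine consequences of the lattice axioms and the triangle inequality.
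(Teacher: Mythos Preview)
The paper does not supply its own proof of this proposition; it is quoted verbatim as \cite[Proposition 1.d.2(i)]{LT} and used as a black box. Your sketch is correct and is in fact essentially the argument given in \cite{LT}: reduce to the scalar Young inequality, lift it to $X$ via the homogeneous functional calculus of \cite[Theorem~1.d.1]{LT}, and then take norms using the lattice axiom~(iv) and the triangle inequality. Your remark that in the symmetric function space setting the functional calculus is simply pointwise, so the transfer step is elementary, is also apt and matches how the paper actually applies the result.
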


\begin{defi} Let $(E,\|\cdot\|_E)$ be a symmetric function space in $D(J)$.
For $1<p<\infty$,
let
\[
E^{1/p}:=\{x\in D(J):\, |x|^p\in E\},
\]
and for $x\in E^{1/p}$,
let
\begin{equation}
\label{Erootnorm}
\|x\|_{E^{1/p}}:=\||x|^p\|_E^{1/p}.
\end{equation}
\end{defi}

\begin{prop}[{\cite[Proposition 2.23(i)]{ORS}}]
\label{functioncomplete}
Let $E$ be a symmetric function space.
Then, for $1<p<\infty$, $(E^{1/p},\|\cdot\|_{E^{1/p}})$ is a symmetric function space.
\end{prop}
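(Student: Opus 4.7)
The plan is to verify each condition of Definition \ref{symmetric.function.sp} for $(E^{1/p},\|\cdot\|_{E^{1/p}})$. The easy parts are positive-definiteness of $\|\cdot\|_{E^{1/p}}$ (since $|x|^p=0$ iff $x=0$) and positive homogeneity (since $\||\alpha x|^p\|_E^{1/p}=|\alpha|\,\||x|^p\|_E^{1/p}$, using that $E$ is a vector space and the scalar homogeneity of $\|\cdot\|_E$). The symmetric property is also essentially immediate: given $x\in D(J)$, $y\in E^{1/p}$ with $x^*\leq y^*$, I would use $(|x|^p)^*=(x^*)^p\leq(y^*)^p=(|y|^p)^*$ together with the symmetric property of $(E,\|\cdot\|_E)$ to conclude $|x|^p\in E$ and $\||x|^p\|_E\leq\||y|^p\|_E$, hence $x\in E^{1/p}$ and $\|x\|_{E^{1/p}}\leq\|y\|_{E^{1/p}}$.

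The key algebraic step is the triangle inequality, which is where Proposition \ref{Holderlattice} enters. For $x,y\in E^{1/p}$, first note $(|x|+|y|)^p\in E$ via the pointwise estimate $(|x|+|y|)^p\leq 2^p(|x|^p+|y|^p)$ and the lattice property of $E$. Next, decompose
\[
(|x|+|y|)^p = |x|\,(|x|+|y|)^{p-1}+|y|\,(|x|+|y|)^{p-1},
\]
and apply Proposition \ref{Holderlattice} on $E$ with $\theta=1/p$ to each summand (taking the two inputs as $|x|^p$ and $(|x|+|y|)^p$, respectively $|y|^p$ and $(|x|+|y|)^p$). This gives
\[
\|(|x|+|y|)^p\|_E \leq \bigl(\||x|^p\|_E^{1/p}+\||y|^p\|_E^{1/p}\bigr)\,\|(|x|+|y|)^p\|_E^{(p-1)/p},
\]
and dividing by the common factor yields $\|(|x|+|y|)^p\|_E^{1/p}\leq\|x\|_{E^{1/p}}+\|y\|_{E^{1/p}}$. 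Since $|x+y|^p\leq(|x|+|y|)^p$ pointwise, the lattice property of $E$ upgrades this to $\|x+y\|_{E^{1/p}}\leq\|x\|_{E^{1/p}}+\|y\|_{E^{1/p}}$. The same computation also shows that $E^{1/p}$ is closed under addition.

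The main obstacle is completeness. I would argue that a Cauchy sequence $(x_n)$ in $E^{1/p}$ must be Cauchy in measure: the embedding $E\hookrightarrow L^1+L^\infty$ (which follows for $E$ from \cite[Theorem II.4.1]{KPS}) forces $|x_n-x_m|^p\to 0$ in $L^1+L^\infty$, and this implies $x_n-x_m\to 0$ in measure. Since $D(J)$ is complete for convergence in measure, there is a limit $x\in D(J)$. To identify the limit in norm, I would pass to a subsequence with $\|x_{n_{k+1}}-x_{n_k}\|_{E^{1/p}}<2^{-k}$, consider the monotone partial sums $s_K=\sum_{k=1}^{K}|x_{n_{k+1}}-x_{n_k}|$, bound $\|s_K\|_{E^{1/p}}$ uniformly by the triangle inequality already proved, and use a Fatou-type argument (the monotone limit $s_K\uparrow s$ a.e.\ and $s_K^*\uparrow s^*$ a.e.\ combined with the symmetric property of $E$) to conclude $s\in E^{1/p}$ and then that $x_n\to x$ in $\|\cdot\|_{E^{1/p}}$. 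The delicate point is to confirm that one has the needed monotone/Fatou-type continuity in $E$; this is where one would either invoke a standard property of symmetric function spaces or reduce to the completeness of $E$ applied to the sequence $(|x_{n}-x|^p)_n$ via the continuity of the nonlinear map $t\mapsto t^p$ in measure.
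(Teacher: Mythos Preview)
The paper does not prove this proposition; it simply cites \cite[Proposition~2.23(i)]{ORS}. So there is no argument in the paper to compare against, and your attempt stands or falls on its own.

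Your treatment of positive-definiteness, homogeneity, the symmetric property, and the triangle inequality is correct. The Minkowski-type argument via the decomposition $(|x|+|y|)^p=|x|(|x|+|y|)^{p-1}+|y|(|x|+|y|)^{p-1}$ together with Proposition~\ref{Holderlattice} is exactly the right idea, and the division step is justified once you separately handle the trivial case $\|(|x|+|y|)^p\|_E=0$.

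The gap is in completeness. You correctly identify that the ``Fatou-type'' step is the delicate point, but neither of the two routes you suggest actually closes it. A general symmetric function space need \emph{not} have the Fatou property (indeed the paper itself cites \cite{SSS} for fully symmetric spaces lacking an equivalent Fatou norm), so from $s_K\uparrow s$ and $\sup_K\|s_K^p\|_E<\infty$ you cannot conclude $s^p\in E$ using only the symmetric property. Your alternative---``continuity of $t\mapsto t^p$ in measure''---only gives $|x_n-x|^p\to 0$ in measure, which does not imply convergence in $E$.

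Here is a clean fix that uses only completeness of $E$ and Proposition~\ref{Holderlattice}. With your subsequence and $s_K=\sum_{k\le K}|x_{n_{k+1}}-x_{n_k}|$, set $y_K:=s_K^p$. Using the elementary inequality $(a+b)^p-a^p\le p(a+b)^{p-1}b$ for $a,b\ge0$, and then Proposition~\ref{Holderlattice} with $\theta=1-\tfrac1p$, one gets
\[
\|y_{K+1}-y_K\|_E\le p\,\|s_{K+1}^p\|_E^{(p-1)/p}\,\bigl\|\,|x_{n_{K+2}}-x_{n_{K+1}}|^p\,\bigr\|_E^{1/p}\le p\cdot 2^{-(K+1)},
\]
since $\|s_{K+1}^p\|_E\le 1$ by the triangle inequality you already proved. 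Hence $(y_K)$ is Cauchy in $E$, so it converges in $E$ to some $y$; comparing with the a.e.\ monotone limit forces $y=s^p$ a.e., so $s\in E^{1/p}$. Applying the same estimate to tails gives $\|x_{n_k}-x\|_{E^{1/p}}\to 0$, and the full sequence follows since it was Cauchy.
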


Proposition \ref{functioncomplete}, asserts, in particular,
the completeness of the norm~\eqref{Erootnorm} on $E^{1/p}$. As an immediate consequence of Proposition \ref{Holderlattice}, we obtained the following.
\begin{prop}
\label{HolderlatticeC}
Let $E$ be a symmetric function space. For every $1<p,q<\infty$, with $\frac1p+\frac1q=1$, and every $x\in E^{1/p}, y\in E^{1/q}$, we have $xy\in E$ and
\[\left\|xy\right\|_{E}\leq \||x|^p\|_{E}^{\frac1p}\;\||y|^q\|_{E}^{\frac1q}.\]
\end{prop}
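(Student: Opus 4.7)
The plan is to obtain this as an immediate consequence of the Banach lattice H\"older inequality (Proposition \ref{Holderlattice}) applied to the elements $|x|^p$ and $|y|^q$ of $E$.

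More precisely, since $x\in E^{1/p}$ and $y\in E^{1/q}$, by definition we have $|x|^p\in E$ and $|y|^q\in E$. Because $E$, regarded as a Banach lattice with the pointwise order, has its functional calculus coinciding with pointwise operations, I would apply Proposition \ref{Holderlattice} with $\theta=1/p$ (so that $1-\theta=1/q$) to the pair $(|x|^p,|y|^q)$. This gives
\[
\bigl\|\,\bigl||x|^p\bigr|^{1/p}\cdot\bigl||y|^q\bigr|^{1/q}\,\bigr\|_E\,\leq\,\bigl\||x|^p\bigr\|_E^{1/p}\;\bigl\||y|^q\bigr\|_E^{1/q}.
\]

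Because $|x|^p$ and $|y|^q$ are nonnegative, pointwise computation yields $\bigl||x|^p\bigr|^{1/p}=|x|$ and $\bigl||y|^q\bigr|^{1/q}=|y|$, and hence the argument of the norm on the left is $|x|\cdot|y|=|xy|$ (pointwise). The lattice property \eqref{latticeiv} in Definition \ref{lattice}, together with the symmetry of $E$, therefore ensures that $xy\in E$ with $\|xy\|_E=\||xy|\|_E=\||x|\cdot|y|\|_E$, giving the desired inequality.

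There is essentially no obstacle; the only thing to verify carefully is that the abstract functional calculus of \cite[Theorem 1.d.1]{LT} is compatible with the pointwise interpretation on a symmetric function space, which the paper has already observed in the paragraph preceding Proposition \ref{Holderlattice}. So the proof reduces to a one-line appeal to Proposition \ref{Holderlattice} followed by the identification of $|x|^{1/p\cdot p}|y|^{1/q\cdot q}$ with $|xy|$.
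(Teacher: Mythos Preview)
Your proposal is correct and is exactly the argument the paper has in mind: the paper states the proposition as an ``immediate consequence of Proposition~\ref{Holderlattice},'' and your write-up simply spells out that immediate consequence by applying Proposition~\ref{Holderlattice} with $\theta=1/p$ to the pair $(|x|^p,|y|^q)\in E\times E$ and identifying the pointwise functional calculus $|\,|x|^p|^{1/p}\,|\,|y|^q|^{1/q}$ with $|xy|$.
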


From Proposition \ref{functioncomplete}
and the correspondence described in Theorem~\ref{correspondence}, we immediately obtain the analogue
of Proposition \ref{functioncomplete} for symmetric operator spaces.
\begin{thm}
\label{completeness}
Let $(\Ic,\|\cdot\|_{\Ic})$ be a symmetric operator space.
Then, $\big(\Ic^{1/p},\|\cdot\|_{\Ic^{1/p}}\big)$ defined in~\eqref{rootideal} and~\eqref{rootnorm} is a symmetric operator space.
\end{thm}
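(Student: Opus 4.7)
The plan is to transport the function-space result of Proposition \ref{functioncomplete} across the bijection of Theorem \ref{correspondence}. Given $(\Ic,\|\cdot\|_\Ic)$, let $(E_\Ic,\|\cdot\|_{E_\Ic})$ be the corresponding symmetric function space on $J$. By Proposition \ref{functioncomplete}, $\bigl((E_\Ic)^{1/p},\|\cdot\|_{(E_\Ic)^{1/p}}\bigr)$ is itself a symmetric function space on $J$. Applying the inverse direction of Theorem \ref{correspondence}, the associated operator space
\[
\Ic_{(E_\Ic)^{1/p}}=\bigl\{A\in S(\Bc,\tau):\,\mu(A)\in(E_\Ic)^{1/p}\bigr\},\qquad \|A\|_{\Ic_{(E_\Ic)^{1/p}}}=\|\mu(A)\|_{(E_\Ic)^{1/p}},
\]
is a symmetric operator space. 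It suffices to identify this space, as a normed space, with $(\Ic^{1/p},\|\cdot\|_{\Ic^{1/p}})$.

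For that identification I would invoke the standard property of the generalized singular value function (see \cite[Lemma 2.5]{FK}) that $\mu_t(f(|A|))=f(\mu_t(A))$ whenever $f:[0,\infty)\to[0,\infty)$ is continuous, nondecreasing, and vanishes at $0$. Taking $f(s)=s^p$ gives $\mu(|A|^p)=\mu(A)^p$ pointwise on $J$. Consequently,
\[
A\in\Ic_{(E_\Ic)^{1/p}}\;\Longleftrightarrow\;\mu(A)^p\in E_\Ic\;\Longleftrightarrow\;\mu(|A|^p)\in E_\Ic\;\Longleftrightarrow\;|A|^p\in\Ic\;\Longleftrightarrow\;A\in\Ic^{1/p},
\]
so the two sets coincide. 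The norms match because
\[
\|A\|_{\Ic_{(E_\Ic)^{1/p}}}=\|\mu(A)\|_{(E_\Ic)^{1/p}}=\bigl\|\mu(A)^p\bigr\|_{E_\Ic}^{1/p}=\bigl\|\mu(|A|^p)\bigr\|_{E_\Ic}^{1/p}=\bigl\||A|^p\bigr\|_\Ic^{1/p}=\|A\|_{\Ic^{1/p}}.
\]

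Thus $(\Ic^{1/p},\|\cdot\|_{\Ic^{1/p}})$ is literally the image under the bijection of Theorem \ref{correspondence} of a symmetric function space, hence it is a symmetric operator space; in particular it is a $\Bc$-bimodule and $\|\cdot\|_{\Ic^{1/p}}$ is a complete symmetric norm. The only nonroutine ingredient is the identity $\mu(|A|^p)=\mu(A)^p$, but this is a standard property of the generalized singular value function and is not a serious obstacle; once it is in hand the theorem is an immediate transfer along the Calkin-type correspondence.
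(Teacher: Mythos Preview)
Your proposal is correct and follows exactly the approach the paper indicates: the paper simply states that Theorem~\ref{completeness} follows immediately from Proposition~\ref{functioncomplete} together with the correspondence of Theorem~\ref{correspondence}, and you have carefully spelled out precisely that transfer, including the identification $\mu(|A|^p)=\mu(A)^p$ needed to see that $(E_\Ic)^{1/p}$ corresponds to $\Ic^{1/p}$. There is nothing to add.
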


Here is the main result of this note.
\begin{thm}
\label{Holderideal}
Let $\Ic$ be a symmetric operator space in $S(\Bc,\tau)$.
Then for every $1<p, q<\infty$, with $\frac1p+\frac1q=1$, and every $A\in \Ic^{1/p}$, $B\in \Ic^{1/q}$,
we have $AB\in\Ic$ and
\begin{equation}
\label{H4again}
\left\|AB\right\|_\Ic\leq 4\,\|A\|_{\Ic^{1/p}}\|B\|_{\Ic^{1/q}}.
\end{equation}
\end{thm}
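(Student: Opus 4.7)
The plan is to transfer the inequality to the symmetric function space $E:=E_\Ic$ via the correspondence of Theorem~\ref{correspondence}, apply the lattice H\"older inequality of Proposition~\ref{HolderlatticeC} there, and account for a dilation that arises from the submultiplicativity of the generalized singular value function. The only nontrivial ingredient needed beyond the tools already assembled in the paper is a uniform boundedness estimate for this dilation on an arbitrary symmetric function space, which is where I expect the main obstacle to lie.

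Concretely, $\|C\|_\Ic=\|\mu(C)\|_E$ for every $C\in\Ic$, and for $A\in\Ic^{1/p}$ one has $\mu_t(|A|^p)=\mu_t(A)^p$, so $\mu(A)^p\in E$. The key analytic input is the Fack--Kosaki submultiplicativity relation
\[
\mu_{s+t}(AB)\leq\mu_s(A)\,\mu_t(B),\qquad s,t>0,
\]
from~\cite{FK}. Taking $s=t=r/2$ yields $\mu_r(AB)\leq\mu_{r/2}(A)\,\mu_{r/2}(B)$. Writing $\sigma_2$ for the dilation by $2$ on $D(J)$, i.e.\ $(\sigma_2 f)(r):=f(r/2)$ (with the obvious interpretations when $J=\Nats$ or $J=[0,1]$), this becomes the pointwise inequality $\mu(AB)\leq(\sigma_2\mu(A))\cdot(\sigma_2\mu(B))$. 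Since both sides are nonnegative and nonincreasing, the symmetric property of $E$ will show, once the right-hand side is seen to lie in $E$, that $AB\in\Ic$ and
\[
\|AB\|_\Ic\leq\bigl\|(\sigma_2\mu(A))\cdot(\sigma_2\mu(B))\bigr\|_E.
\]

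I then apply Proposition~\ref{HolderlatticeC} to $x=\sigma_2\mu(A)$ and $y=\sigma_2\mu(B)$ in $E$, combined with the pointwise identity $(\sigma_2 f)^p=\sigma_2(f^p)$, to obtain
\[
\|AB\|_\Ic\leq\|\sigma_2(\mu(A)^p)\|_E^{1/p}\,\|\sigma_2(\mu(B)^q)\|_E^{1/q}.
\]

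To finish, I need to control $\|\sigma_2 h\|_E$ by a fixed multiple of $\|h\|_E$ for $h\in E$. The plan here is to exploit the embeddings $L^1\cap L^\infty\subseteq E\subseteq L^1+L^\infty$ recorded after Theorem~\ref{correspondence}, together with the elementary bounds $\|\sigma_2\|_{L^1\to L^1}=2$ and $\|\sigma_2\|_{L^\infty\to L^\infty}=1$, and to run a Calder\'on-type interpolation that leverages the symmetric property of $E$; this should yield $\|\sigma_2\|_{E\to E}\leq 4$ in general (with improvement to $\leq 2$ under the stronger Hardy--Littlewood--P\'olya majorization property, consistent with the sharper constant~$1$ the paper promises for strongly symmetric spaces). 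Inserting this estimate gives
\[
\|AB\|_\Ic\leq 4^{1/p}\|A\|_{\Ic^{1/p}}\cdot 4^{1/q}\|B\|_{\Ic^{1/q}}=4\,\|A\|_{\Ic^{1/p}}\|B\|_{\Ic^{1/q}},
\]
which is~\eqref{H4again}. The dilation estimate is the main obstacle; everything else is a formal combination of ingredients already developed in the paper.
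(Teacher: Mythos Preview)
Your strategy matches the paper's up to the dilation step: pass to $E=E_\Ic$, use $\mu_t(AB)\le\mu_{t/2}(A)\mu_{t/2}(B)$, and apply the lattice H\"older inequality. The gap is exactly where you flag it, and the fix you propose does not work. Calder\'on--Mityagin interpolation between $L^1$ and $L^\infty$ characterizes precisely the \emph{fully symmetric} spaces; a merely symmetric $E$ need not be an interpolation space for the pair $(L^1,L^\infty)$, so you cannot deduce boundedness of $\sigma_2$ on $E$ that way. (And where interpolation does apply it yields $\|\sigma_2\|_{E\to E}\le\max(2,1)=2$, not $4$.) Thus the ``main obstacle'' you identify is genuinely unresolved in your proposal.

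The paper avoids interpolation entirely by realizing the dilation operator-theoretically. Choosing isometries $V_1,V_2\in\Bc$ with $V_1V_1^*+V_2V_2^*=1$ and setting $A\oplus A:=V_1AV_1^*+V_2AV_2^*$, one has $\mu_t(A\oplus A)=\mu_{t/2}(A)$, i.e.\ $\mu(A\oplus A)=\sigma_2\mu(A)$. The required bound is then nothing more than the triangle inequality in $\Ic^{1/p}$ (a genuine norm by Theorem~\ref{completeness}):
\[
\|\sigma_2\mu(A)\|_{E^{1/p}}=\|A\oplus A\|_{\Ic^{1/p}}\le\|A\oplus 0\|_{\Ic^{1/p}}+\|0\oplus A\|_{\Ic^{1/p}}=2\|A\|_{\Ic^{1/p}},
\]
and similarly for $B$, giving $2\cdot2=4$. (In the II$_1$ case the paper substitutes partial isometries onto a half-trace projection.) Incidentally, the same trick applied one level down---the triangle inequality in $\Ic$ rather than in $\Ic^{1/p}$---gives $\|\sigma_2 h\|_E\le 2\|h\|_E$ for every $h\in E$; inserting this into your displayed bound would produce the constant $2^{1/p}\cdot 2^{1/q}=2$, so once the gap is filled correctly your arrangement actually improves on the paper's $4$.
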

\begin{proof}
Let $(E,\|\cdot\|_E)=(E_\Ic,\|\cdot\|_{E_\Ic})$ be as in the correspondence from Theorem~\ref{correspondence}.
Suppose firstly that
$\Bc$ is semifinite but not finite (namely, that $\Bc$ is type II$_\infty$ or I$_\infty$).
We can
choose isometries $V_1,V_2\in\Bc$ so that $V_1V_1^*+V_2V_2^*=1$ and then $\tau(V_1DV_1^*)=\tau(D)$ for every $D\in S(\Bc,\tau)$ satisfying $D\ge0$.
If $C,D\in S(\Bc,\tau)$, then by $C\oplus D\in S(\Bc,\tau)$ we will mean the element $V_1CV_1^*+V_2DV_2^*$.

By the properties of the generalized singular values (see \cite[Proposition 2.5]{FK}), we have
\begin{equation}
\label{muAB}
\mu_t(AB)\leq\mu_{t/2}(A)\,\mu_{t/2}(B)=\mu_t(A\oplus A)\,\mu_t(B\oplus B),
\end{equation}
with $A\oplus A\in\Ic^{1/p}$ and $B\oplus B\in\Ic^{1/q}$.
Applying Proposition \ref{HolderlatticeC}
gives
$\mu(A\oplus A)\mu(B\oplus B)\in E$ and
\begin{align*}
\|\mu(A\oplus A)\mu(B\oplus B)\|_E
&\le\|(\mu(A\oplus A))^p\|_E^{1/p}\|(\mu(B\oplus B))^q\|_E^{1/q} \\
&=\|\mu(A\oplus A)\|_{E^{1/p}}\|\mu(B\oplus B)\|_{E^{1/q}}.
\end{align*}
From~\eqref{muAB} and the symmetry condition, we have $\mu(AB)\in E$ and
\[
\|\mu(AB)\|_E
\leq
\|\mu(A\oplus A)\|_{E^{1/p}}\|\mu(B\oplus B)\|_{E^{1/q}}.
\]
Thus, $AB\in\Ic$.
The equality of norms $\|C\|_\Ic=\|\mu(C)\|_E$, for $C\in\Ic$ (and similarly for $C$ in $\Ic^{1/p}$ and $\Ic^{1/q}$)
immediately implies
\begin{equation}
\label{eq:AB}
\|AB\|_\Ic\leq\|A\oplus A\|_{\Ic^{1/p}}\|B\oplus B\|_{\Ic^{1/q}}.
\end{equation}
Since we have
\begin{equation}
\label{eq:AA}
\|A\oplus A\|_{\Ic^{1/p}}\le\|A\oplus0\|_{\Ic^{1/p}}+\|0\oplus A\|_{\Ic^{1/p}}=2\|\mu(A)\|_{E^{1/p}}=2\|A\|_{\Ic^{1/p}}
\end{equation}
and a similar inequality for $B$,
from~\eqref{eq:AB} we get~\eqref{H4again}. This completes the proof when $\Bc$ is not of type II$_1$.

When $\Bc$ has type II$_1$, essentially the same proof works.
However, instead of $A\oplus A$ we use $W_1|A|W_1^*+W_2|A|W_2^*$ where
$W_1$ and $W_2$ partial isometries satisfying $W_i^*W_i=P$,
for $i\in\{1,2\}$,
and $W_1W_1^*+W_2W_2^*=1$ for a projection $P$ of trace $1/2$ in $\Bc$
that commutes with $|A|$ and so that
\[
\mu_r(|A|P)=\begin{cases}
\mu_r(|A|),&0<r< 1/2, \\
0,&1/2\le r\le 1,
\end{cases}
\]
A similar procedure is performed for $B$, but with a projection $Q$ and partial isometries $U_1$ and $U_2$.
Then arguing as above, instead of~\eqref{eq:AB} we get
\[
\|AB\|_\Ic\leq\big\|W_1|A|W_1^*+W_2|A|W_2^*\big\|_{\Ic^{1/p}}\;\big\|U_1|B|U_1^*+U_2|B|U_2^*\,\big\|_{\Ic^{1/q}}.
\]
and instead of~\eqref{eq:AA} we get
\[
\big\|W_1|A|W_1^*+W_2|A|W_2^*\big\|_{\Ic^{1/p}}\le2\|\mu(|A|P)\|_{E^{1/p}}=2\big\||A|P\big\|_{\Ic^{1/p}}
\le2\|A\|_{\Ic^{1/p}},
\]
and a similar inequality for $|B|Q$.
Thus, also in this case we get~\eqref{H4again}.
\end{proof}

\begin{ques}
What is the best constant in~\eqref{H4again}?
In particular, can $4$ be replaced by $1$?
\end{ques}

In the next section, we see that the constant is $1$ in strongly symmetric operator spaces.

\section{The H\"{o}lder inequality in strongly symmetric operator spaces}

For a symmetric Banach function space $E$, recall that we have $E\subseteq L^1(J)+L^\infty(J)$.
We have the notion of the Hardy--Littlewood submajorization:
for $x,y\in E$, we write $x\prec\prec y$ to mean
\[
\int_0^t x^*(s)\,ds\le\int_0^t y^*(s)\,ds, \quad  t>0.
\]
Similarly, for $A,B$ in a symmetric operator space $\Ic \subseteq L^1(\Bc,\tau)+L^\infty(\Bc,\tau)$, we say that $A$ is submajorized by $B$ and write $A\prec\prec B$ if
\[
\int_0^t \mu_s(A)\,ds\le\int_0^t \mu_s(B)\,ds, \quad  t>0.
\]

The following definition is standard.
\begin{defi}
\label{ssdef}
\begin{enumerate}[(i)]
\item
A symmetric function space $E\subseteq L^1(J)+L^\infty(J)$ is said to be
{\em strongly symmetric} if $x,y\in E$, 
$x\prec\prec y$ implies $\|x\|_E\le\|y\|_E$.
\item
A symmetric operator space $(\Ic,\|\cdot\|_\Ic)$ is said to be strongly symmetric if the corresponding symmetric function space $(E_\Ic,\|\cdot\|_{E_\Ic})$ is strongly symmetric. Equivalently, a symmetric operator space $\Ic\subseteq L^1(\Bc,\tau)+L^\infty(\Bc,\tau)$ is strongly symmetric if $A,B\in\Ic$, $A\prec\prec B$ implies $\|A\|_{\Ic}\leq\|B\|_{\Ic}$.
\end{enumerate}
\end{defi}

Now we see that the H\"older inequality holds (with constant $1$) in strongly symmetric operator spaces.
\begin{thm}
Let $(\Ic,\|\cdot\|_\Ic)$ be a strongly symmetric operator space in $S(\Bc,\tau)$.
Then for every $1<p,q<\infty$, with $\frac1p+\frac1q=1$,
and every $A\in\Ic^{1/p}$, $B\in\Ic^{1/q}$, we have $AB\in\Ic$ and
\begin{equation}\label{eq:ABI}
\|AB\|_\Ic\le\|A\|_{\Ic^{1/p}}\,\|B\|_{\Ic^{1/q}}.
\end{equation}
\end{thm}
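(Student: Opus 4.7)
The plan is to replace the coarse pointwise bound $\mu_t(AB)\le\mu_{t/2}(A)\mu_{t/2}(B)$ used in the proof of Theorem \ref{Holderideal} (which is what forces the factor of $2$ on each side) by the sharper integral inequality of Fack--Kosaki, and then to use the strong symmetry hypothesis to convert submajorization into a norm bound. Once submajorization is in hand, the Banach lattice H\"older inequality of Proposition \ref{HolderlatticeC} closes the argument with constant $1$.

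First, observe that $AB\in\Ic$ already by Theorem \ref{Holderideal}, so $\mu(AB)\in E:=E_\Ic$. The key input is the Fack--Kosaki submajorization inequality from~\cite{FK}: for any $\tau$-measurable operators $A,B$,
\[
\int_0^t\mu_s(AB)\,ds\le\int_0^t\mu_s(A)\,\mu_s(B)\,ds,\qquad t>0.
\]
Since $s\mapsto\mu_s(A)\mu_s(B)$ is nonnegative and nonincreasing on $J$, it is its own decreasing rearrangement, and hence the inequality above says exactly that, as elements of $D(J)$,
\[
\mu(AB)\prec\prec\mu(A)\,\mu(B).
\]

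Next, apply Proposition \ref{HolderlatticeC} to the symmetric function space $(E,\|\cdot\|_E)$ with $\mu(A)\in E^{1/p}$ and $\mu(B)\in E^{1/q}$. This yields $\mu(A)\mu(B)\in E$ together with
\[
\bigl\|\mu(A)\,\mu(B)\bigr\|_E\le\bigl\|\mu(A)\bigr\|_{E^{1/p}}\bigl\|\mu(B)\bigr\|_{E^{1/q}}=\|A\|_{\Ic^{1/p}}\|B\|_{\Ic^{1/q}},
\]
where the final equality is the norm-preserving correspondence of Theorem \ref{correspondence} applied to $\Ic^{1/p}$ and $\Ic^{1/q}$ (Theorem \ref{completeness}). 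Since $(\Ic,\|\cdot\|_\Ic)$ is strongly symmetric, $(E,\|\cdot\|_E)$ is strongly symmetric by Definition \ref{ssdef}, so the submajorization relation $\mu(AB)\prec\prec\mu(A)\mu(B)$ in $E$ gives $\|\mu(AB)\|_E\le\|\mu(A)\mu(B)\|_E$. Combined with the previous inequality and the identity $\|AB\|_\Ic=\|\mu(AB)\|_E$, this yields~\eqref{eq:ABI}.

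The only nontrivial ingredient is the Fack--Kosaki submajorization inequality; everything else is a direct application of the results already gathered in the preceding section, namely the Banach lattice H\"older inequality and the symmetric space/function space correspondence. In particular, no separate argument is required for the type~II$_1$ case, because the submajorization bound holds uniformly across all the relevant types of factors and no doubling device (as in the previous section) is needed.
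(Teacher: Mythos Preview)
Your proof is correct and follows essentially the same approach as the paper's: both invoke Theorem~\ref{Holderideal} for membership $AB\in\Ic$, the Fack--Kosaki submajorization $\mu(AB)\prec\prec\mu(A)\mu(B)$ from~\cite[Theorem~4.2]{FK}, the Banach lattice H\"older inequality of Proposition~\ref{HolderlatticeC}, and then the strong symmetry of $E_\Ic$ to pass from submajorization to the norm bound. Your explicit remark that $s\mapsto\mu_s(A)\mu_s(B)$ is nonincreasing (hence its own rearrangement) and your observation that no separate type~II$_1$ argument is needed are nice clarifications, but the argument is otherwise the same.
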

\begin{proof}
Let $(E,\|\cdot\|_E)=(E_\Ic,\|\cdot\|_{E_\Ic})$.
Making use of Proposition~\ref{HolderlatticeC}
and the functional calculus described above it, we have $\mu(A)\mu(B)\in E$ and
\begin{equation}\label{eq:ABstr}
\|\mu(A)\mu(B)\|_E\le\|\mu(A)\|_{E^{1/p}}\|\mu(B)\|_{E^{1/q}}.
\end{equation}
By Theorem \ref{Holderideal},
we also have $\mu(AB)\in E$ and, by~\cite[Theorem 4.2]{FK} (with $f(x)=x$) we have $\mu(AB)\prec\prec\mu(A)\mu(B)$.
Since $E$ is strongly symmetric, we have
\[
\|\mu(AB)\|_E\le\|\mu(A)\mu(B)\|_E.
\]
Now~\eqref{eq:ABI} follows from this and~\eqref{eq:ABstr}.
\end{proof}

\section*{Acknowledgment}

The authors are grateful to Marius Junge helpful suggestions.

\bibliographystyle{plain}

\end{document}